\date{July 1, 2019}
\title[Maximal surfaces containing entire null line]{
Space-like maximal surfaces containing\\
entire null lines 
in Lorentz-Minkowski 3-space}
\author{S. Akaminte}
\address[Shintaro Akamine]{%
Graduate School of Mathematics, 
Nagoya University, Chikusa-ku, Nagoya 464-8602, Japan.}
\email{s-akamine@math.nagoya-u.ac.jp}
\author{M.~Umehara}
\address[Masaaki Umehara]{%
   Department of Mathematical and Computing Sciences,
   Tokyo Institute of Technology
   2-12-1-W8-34, O-okayama, Meguro-ku,
   Tokyo 152-8552, Japan.
}
\email{umehara@is.titech.ac.jp}
\author{K.~Yamada}
\address[Kotaro Yamada]{%
   Department of Mathematics\\
   Tokyo Institute of Technology\\
   O-okayama, Meguro, Tokyo 152-8551, 
   Japan
}
\email{kotaro@math.titech.ac.jp}
\keywords{maximal surface, type change, zero mean curvature,
         Lorentz-Minkowski space}
\subjclass[2010]{53A10, 53B30; 35M10}
\newcommand{\op}[1]{{\operatorname{#1}}}
\newcommand{\mb}[1]{\vect{#1}}
\newcommand{\vect}[1]{\boldsymbol{#1}}
\newcommand{\R}{\boldsymbol{R}}
\newcommand{\Z}{\boldsymbol{Z}}
\renewcommand{\phi}{\varphi}
\newcommand{\pmt}[1]{{\begin{pmatrix} #1  \end{pmatrix}}}
 \newtheorem{theorem}{Theorem}[section]
 \newtheorem{proposition}[theorem]{Proposition}
 \newtheorem{fact}[theorem]{Fact}
 \newtheorem{corollary}[theorem]{Corollary}
\theoremstyle{definition}
 \newtheorem{definition}[theorem]{Definition}
\theoremstyle{remark}
 \newtheorem{remark}[theorem]{Remark}
 \newtheorem*{remark*}{Remark}
 \newtheorem*{acknowledgement}{Acknowledgement}
\begin{document}
\maketitle

\begin{abstract}
Consider a surface $S$ immersed in 
the Lorentz-Minkowski 3-space $\R^3_1$.
A complete light-like line in $\R^3_1$ is called
an {\it entire null line} on the surface $S$
in $\R^3_1$ if it lies on $S$ 
and consists of only null points with
respect to the induced metric.  
In this paper, we show the existence of embedded
space-like maximal graphs
containing entire null lines.
If such a graph is defined on a convex domain in $\R^2$,
then it must be a light-like plane 
(cf. Remark \ref{rmk:new}).
Our example is critical in the sense that 
it is defined on a certain non-convex domain.
\end{abstract}

\section{Introduction} \label{sec:1} 

We let $\R^3_1$ be the Lorentz-Minkowski 3-space
of signature $(++-)$.
It is well-known that there are no complete  space-like
zero mean curvature immersions in $\R^3_1$ other than
planes (cf. \cite{CY}). 
It is thus interesting to investigate 
singularities of zero mean curvature surfaces.
In fact, maximal surfaces with singularities
were investigated by several geometers,
see references in \cite{UY1}.
Regarding these works,
the second and third authors introduced
the concept of {\it maxface} in
\cite{UY1}, which covers a large class of
space-like surfaces
with zero mean curvature admitting
singular points. 
For example, the elliptic catenoid 
$$
f_0:=(\sinh v\cos u,\sinh v \sin u,v)\,\, (|u|\le \pi,\,\,v\in \R)
$$
is a typical example of a maxface
 with a cone-like singular point (cf.~Fig.~1,~left).

\begin{figure}[htb]
 \begin{center}
        \includegraphics[height=2.5cm]{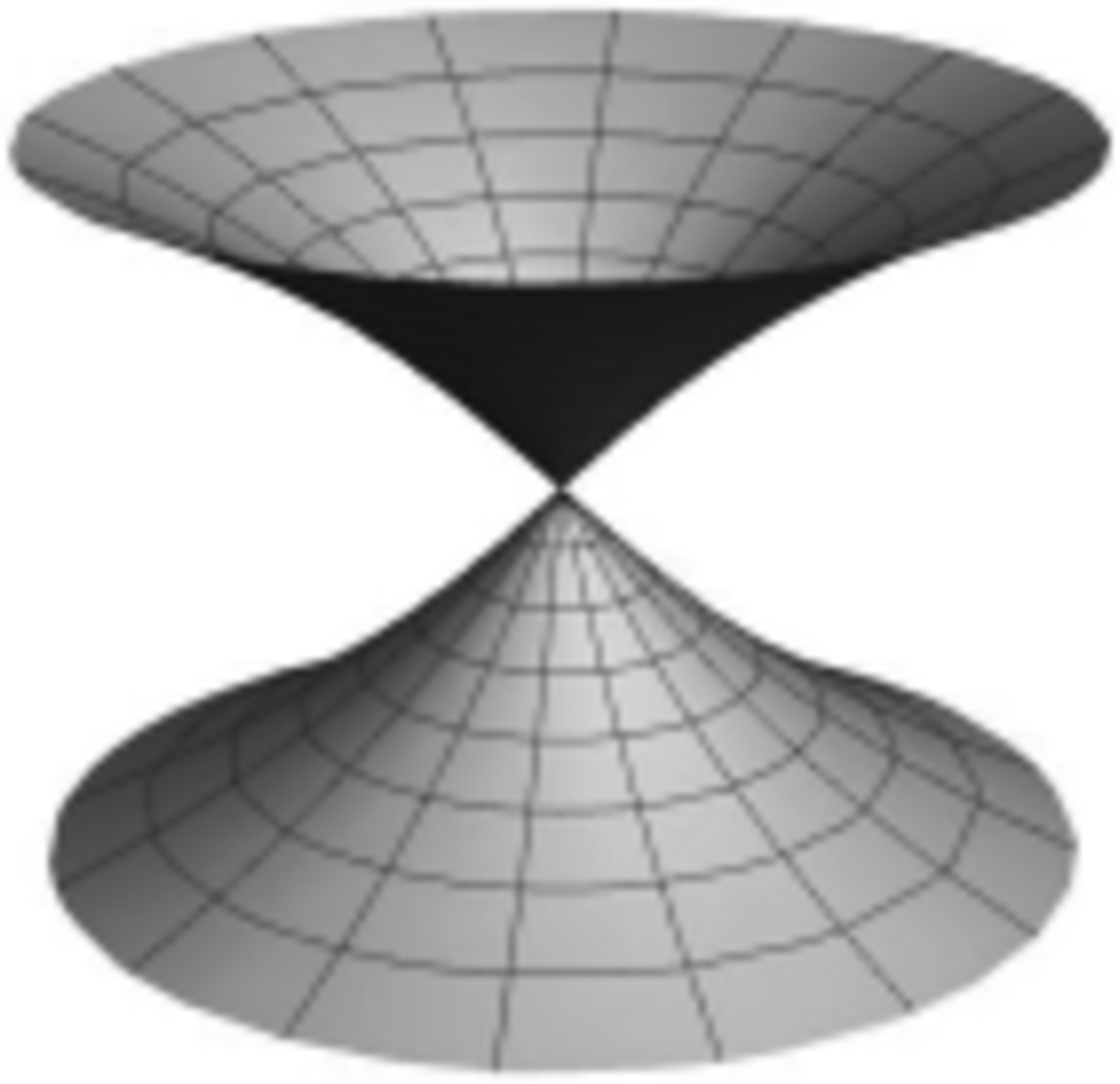}\quad
        \includegraphics[height=2.7cm]{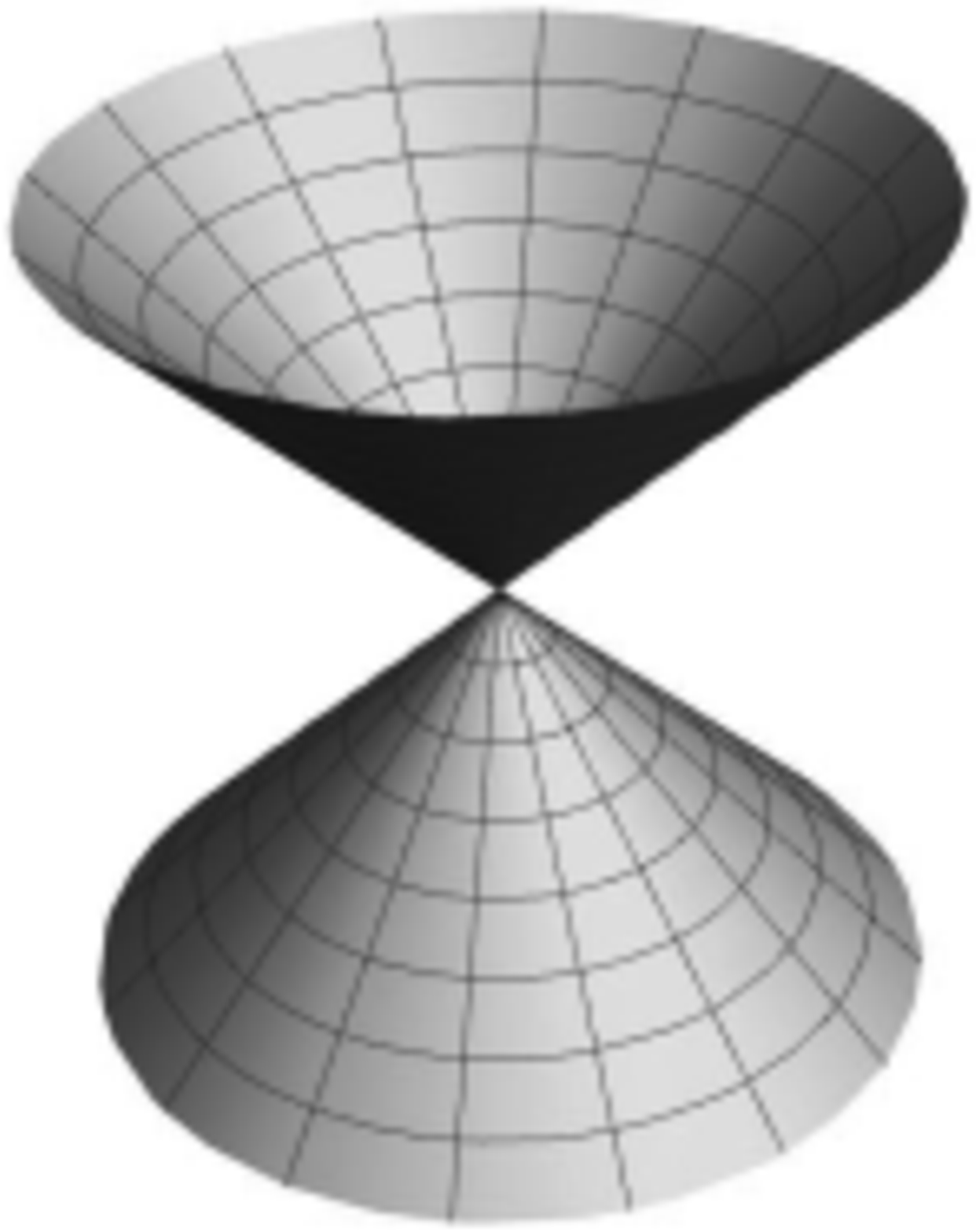}
 \end{center}
\caption{The elliptic catenoid (left) and the light-cone
 (right)}
\label{fig:KM}
\end{figure}

However, if space-like maximal surfaces
have analytic extensions which change their causal type,
then those extensions cannot remain in the class of maxfaces.
In \cite{UY3}, two functions $A_f$ and $B_f$
for immersed hypersurfaces in Lorentzian manifolds
were introduced.
Using them, we  give here a new notion \lq{ZMC-map}\rq\ 
as an appropriate new class for
zero mean curvature surfaces with singularities, 
as follows
(maxfaces are all ZMC-maps but 
the converse is not true):
Let $U$ be a domain in the $uv$-plane, and let 
$f:U\to \R^3_1$ be a smooth map
into the Lorentz-Minkowski 3-space $\R^3_1$.
We set
$$
P:=
\pmt{f_{u}\cdot f_{u}\, &\, f_{u}\cdot f_{v} \\
f_{v}\cdot f_{u}\, &\, f_{v}\cdot f_{v} 
}
$$
and 
$$
B_f:=\det(P),
$$
where $\cdot$ denotes the canonical Lorentzian product of
$\R^3_1$ and $\det(P)$ denotes the determinant of the $2\times 2$
matrix $P$. 
We set
$$
Q:=
\pmt{f_{uu}\cdot \tilde \nu\, &\, f_{uv}\cdot \tilde \nu \\
f_{vu}\cdot \tilde \nu\, &\, f_{vv}\cdot \tilde \nu 
},
$$
where 
$$
\tilde \nu:=\pmt{
1 & 0 & 0 \\
0 & 1 & 0 \\
0 & 0 & -1 
} (f_u\times_E f_v)
$$
and $f_u\times_E f_v$ denotes 
the (Euclidean) vector product of $\R^3$.
Here $\tilde \nu$ gives a (Lorentzian) normal vector 
field of $f$ defined on $U$.
Consider the matrix
$$
W:=\tilde P Q
$$
and set
$$
A_f:=\op{trace}(W), 
$$
where $\tilde P$ is the cofactor matrix of $P$.
We call $f$ a {\it ZMC-map}
(i.e.~{\it zero mean curvature map})
if it is an immersion on an open dense subset of $U$
and $A_f$ vanishes identically.
Since the property that $A_f$ varnishes
is independent of the choice of local coordinates,
ZMC-maps can be defined from an
arbitrarily given $2$-manifold.
Moreover, one can generalize the concept of
ZMC-map for a map into $\R^{n+1}_1$ from an
arbitrarily given $n$-manifold 
using the two functions $A_f$ and $B_f$
given in \cite{UY3}.
We let $f$ be a ZMC-map.
A point $p\in U$ is said to be a {\it space-like point}
(resp. {\it time-like point}) of $f$, 
if $B_f(p)>0$ (resp. $B_f(p)<0$).
A point which is neither space-like nor time-like is
said to be {\it a null point} or {\it a light-like point} of $f$.
(If $p$ is a singular point of $f$, then $B_f(p)=0$.
So $p$ is a null point.)

If $f:U\to \R^3_1$ is a light-like surface, that is,
all points on $U$ are null points, then 
$A_f$ vanishes identically (cf. \cite[Example 4]{Kl}).
So, by our definition, a light-like surface
is also an example of a ZMC-map. 

\medskip
\begin{definition}
A ZMC-map is said to be  {\it maximal type} 
(resp. {\it of mixed type})
if $B_f\ge 0$ but does not vanish identically
(resp.~$B_f$ changes its sign on $U$.)
On the other hand, 
a ZMC-map is said to be  {\it null} or {\it light-like} if
$B_f$ vanishes identically.
A ZMC-map which is neither maximal, 
of mixed type nor light-like
is said to be {\it time-like}.
\end{definition}


\medskip
We now fix $f:U\to \R^3_1$
as a ZMC-map.
A null point $p\in U$ is called  {\it degenerate} if
the exterior derivative of $B_f$ vanishes at $p$.
Consider the light-cone
$$
f_1:=(v\cos u,v \sin u,v)\qquad
(0\le u<2\pi,\,\,v\in \R),
$$
which is a light-like ZMC-map 
consisting only of degenerate null points.

\medskip
\begin{definition}
We say that a ZMC-map $f:U\to \R^3_1$
{\it contains a null line} if
there exists an open interval $I$ and
a smooth curve $\gamma:I\to U$ such that
$\gamma(I)$ consists only of null points
and $f\circ \gamma(I)$
is a subset of a light-like line in $\R^3_1$.
In this case,  $f\circ \gamma(I)$ is called a
{\it null line}.
Moreover, if $f\circ \gamma(I)$ coincides 
with a complete light-like line, we call
it an {\it entire null line}.
\end{definition}

\medskip
Klyachin \cite{Kl} showed the following fact:

\medskip
\begin{fact}[The line theorem for ZMC-surfaces]\label{fact:IL}
{\it Let $f\colon U\to \R^3_1$ be a ZMC-immersion 
such that $o\in U$ is a degenerate 
null point. Then, $f$ contains
a null line passing through $f(o)$.}
\end{fact}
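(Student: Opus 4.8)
The plan is to work locally near the degenerate null point $o$ and to realize the desired null line as the image of a distinguished characteristic curve. Since $f$ is an immersion and the tangent plane at a null point is light-like but non-vertical, $f$ can be written near $o$ as a graph $f=(x,y,h(x,y))$ over the $xy$-plane, with $o$ the origin. In these coordinates $B_f=1-h_x^2-h_y^2$, and the ZMC condition $A_f\equiv 0$ reduces to the quasilinear equation
\[
(1-h_y^2)h_{xx}+2h_xh_y\,h_{xy}+(1-h_x^2)h_{yy}=0 .
\]
After a rotation of the $xy$-plane I may assume $\grad h(o)=(1,0)$, so that $o$ is null ($|\grad h(o)|=1$) and the image null direction is $f_x(o)=(1,0,1)$. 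The degeneracy $dB_f(o)=0$ is equivalent to $\grad(|\grad h|^2)(o)=0$, which forces $h_{xx}(o)=h_{xy}(o)=0$; the candidate null line is then $t\mapsto f(o)+t(1,0,1)$.

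Next I would introduce the characteristic curve $\gamma$ as the integral curve of $\grad h$ with $\gamma(0)=o$; a short computation shows that at a null point the unique null tangent direction of the surface pulls back, in the $xy$-domain, to the direction $\grad h$, so $\gamma$ is the natural carrier of a null line. Writing $\beta(t)=B_f(\gamma(t))$ and differentiating yields $\dot\beta=-2\,\mathrm{II}$, where $\mathrm{II}=(\grad h)^{\top}(\op{Hess}h)(\grad h)$. The single most useful identity is obtained by subtracting the ZMC equation from $\mathrm{II}$:
\[
\mathrm{II}=-B_f\,\Delta h \qquad(\Delta=\text{Euclidean Laplacian}),
\]
which turns the previous relation into the \emph{linear} homogeneous ODE $\dot\beta=2\,\Delta h(\gamma(t))\,\beta$. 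Since $\beta(0)=0$, uniqueness of solutions gives $\beta\equiv 0$: the curve $\gamma$ stays in the null set.

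It then remains to show that the image $f\circ\gamma$ is straight. Using $|\grad h|=1$ one finds $\tfrac{d}{dt}(f\circ\gamma)=(h_x,h_y,1)$ along $\gamma$, so $f\circ\gamma$ is a line exactly when $\grad h$ is constant along $\gamma$. Equivalently, in the orthonormal frame $(v,v^\perp)$ with $v=\grad h$, the Hessian of $h$ takes the form $\left(\begin{smallmatrix}0&\mu\\\mu&\rho\end{smallmatrix}\right)$ on the null set (the $vv$-entry vanishes because $\mathrm{II}=0$), and $\tfrac{d}{dt}\grad h=\mu\,v^\perp$; thus straightness is equivalent to $\mu\equiv 0$. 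Because $-\tfrac12\grad B_f=(h_xh_{xx}+h_yh_{xy},\,h_xh_{xy}+h_yh_{yy})=\mu\,v^\perp$ on the null set, the vanishing of $\mu$ at a point is precisely the statement that the point is a \emph{degenerate} null point; the normalization gives $\mu(0)=0$. So the whole theorem reduces to propagating the degeneracy along $\gamma$.

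This propagation is the main obstacle. Differentiating $\mu=v^{\top}(\op{Hess}h)v^\perp$ along $\gamma$ gives $\dot\mu=\mu\,\Delta h+E$, where $E=v^{\top}(\partial_v\op{Hess}h)v^\perp$ is a genuinely third-order term; a priori $E$ need not vanish, and the naive differentiations of $\mathrm{II}\equiv 0$ only ever reproduce the relation $v^{\top}(\partial_v\op{Hess}h)v=-2\mu^2$, never controlling $E$. The key, instead, is to differentiate the ZMC equation in the \emph{normal} direction $v^\perp$: carried out along the null set, all second-order terms cancel in pairs (the coefficients organize into $-2\mu\rho+2\rho\mu$), leaving exactly $\partial_{v^\perp}(\text{ZMC})=E$. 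Since the ZMC equation holds identically, $E\equiv 0$, whence $\mu$ obeys the linear homogeneous ODE $\dot\mu=\Delta h(\gamma(t))\,\mu$. With $\mu(0)=0$, uniqueness forces $\mu\equiv 0$; therefore $\grad h$ is constant along $\gamma$, the image $f\circ\gamma(t)=f(o)+t(1,0,1)$ is a light-like line through $f(o)$, and every point of $\gamma$ is a (degenerate) null point, which is exactly the assertion of Fact~\ref{fact:IL}.
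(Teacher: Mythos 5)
Your proposal is correct, but there is nothing in the paper to compare it against: Fact \ref{fact:IL} is quoted from Klyachin \cite{Kl} (with generalizations in \cite{UY2,UY3}) and no proof is given here, so you have supplied a self-contained argument for a result the paper only cites. The argument is in the same general spirit as the known proofs (a characteristic curve along which two quantities are propagated by linear homogeneous ODEs), and both of your computational pivots check out. First, writing $\mathcal{A}:=(1-h_y^2)h_{xx}+2h_xh_yh_{xy}+(1-h_x^2)h_{yy}$ and using the trace identity $v^{\top}Sv+(Jv)^{\top}S(Jv)=|v|^2\op{trace}S$ for symmetric $S$, one gets $\mathcal{A}=B_f\,\Delta h+(\grad h)^{\top}(\op{Hess}h)(\grad h)$, which is exactly your identity $\mathrm{II}=-B_f\Delta h$ and yields $\dot\beta=2\Delta h\,\beta$, hence $\beta\equiv0$. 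Second, differentiating $\mathcal{A}=B_f\Delta h+\mathrm{II}$ in the direction $v^{\perp}$ at a point of the null set gives
$0=(\partial_{v^{\perp}}B_f)\Delta h+0+2\bigl(\op{Hess}h\,v^{\perp}\bigr)^{\top}\op{Hess}h\,v+T(v,v,v^{\perp})=-2\mu\rho+2\mu\rho+E$,
where $T$ denotes the totally symmetric tensor of third derivatives of $h$; the symmetry $T(v,v,v^{\perp})=T(v,v^{\perp},v)$ is what identifies this quantity with your $E$, so indeed $E\equiv0$ on the null set and $\dot\mu=\Delta h\,\mu$ follows, whence $\mu\equiv0$ from $\mu(0)=0$ and the image $f\circ\gamma$ is a genuine light-like line. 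Two presentational points you should make explicit: the two ODE steps must be run in order (only after $\beta\equiv0$ is established does $\gamma$ lie in the null set, so that $|\grad h|=1$, $\dot\gamma=v$, $\dot v=\mu v^{\perp}$, $\dot v^{\perp}=-\mu v$ and $\rho=\Delta h$ are all legitimate along $\gamma$), and the first step uses only that $o$ is a null point, the degeneracy hypothesis entering solely through the initial condition $\mu(0)=0$.
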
 

\begin{figure}[htb]
 \begin{center}
   \begin{tabular}{c@{\hspace{0.3cm}}c@{\hspace{-0.4cm}}c}
        \includegraphics[height=2.5cm]{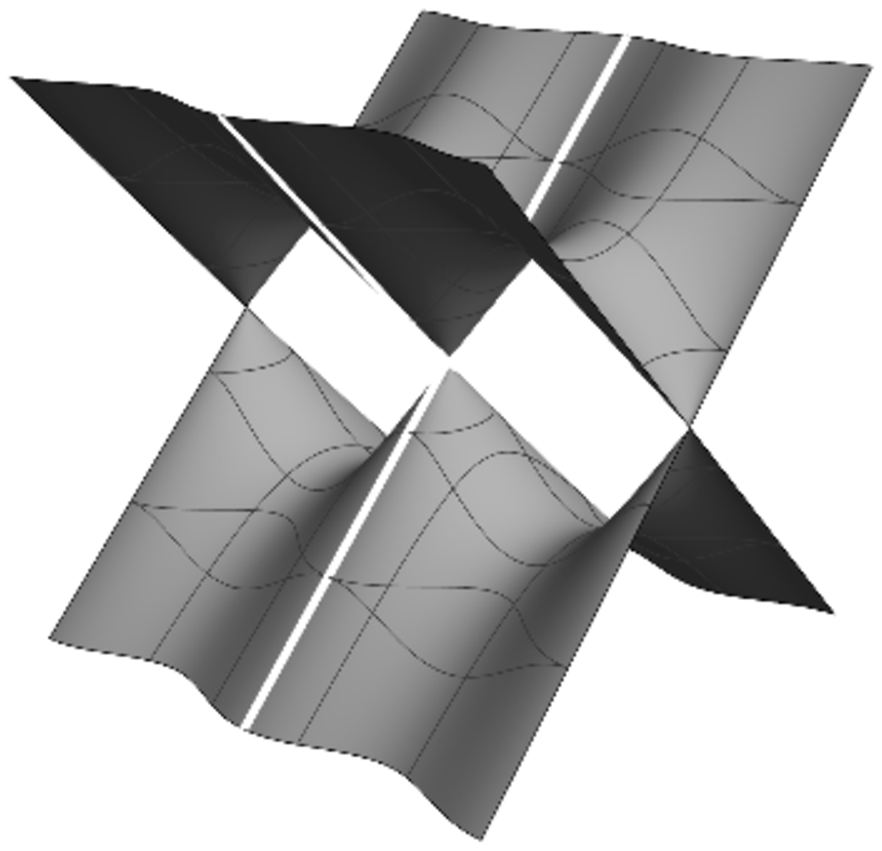} &
        \includegraphics[height=2.5cm]{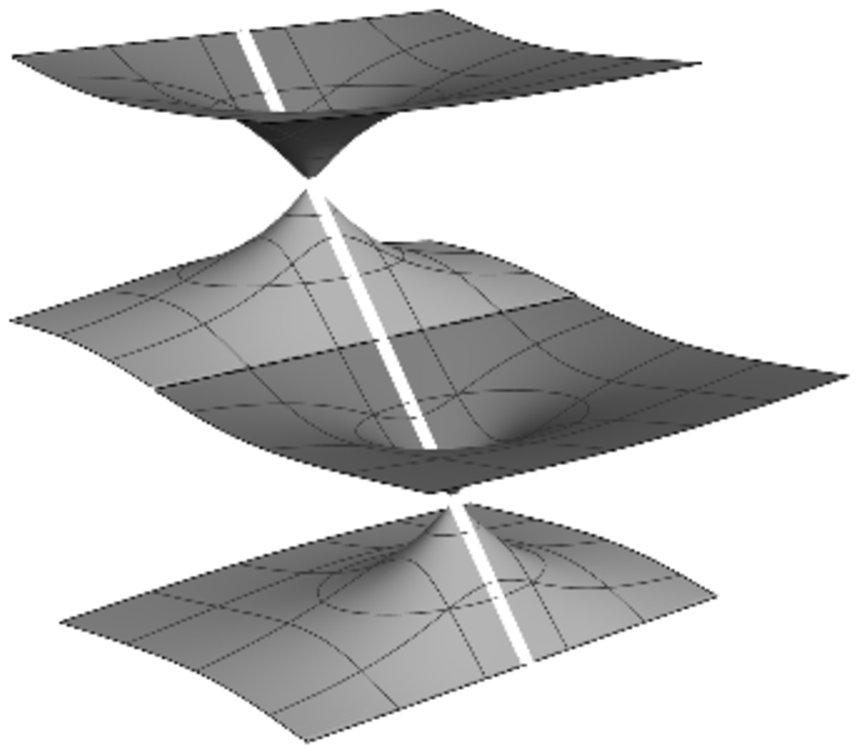} &
    \end{tabular}
\caption{Maximal surfaces with cone-like singular points
lying on null lines, where the white lines indicate null points.}
\label{fig:0}
 \end{center}
\end{figure}

This fact was generalized to a much wider class of surfaces,
including real analytic constant mean curvature surfaces in $\R^3_1$, 
see \cite{UY2,UY3}. 
Although 
there are properly embedded time-like ZMC-surfaces
with an entire null line (see  \cite[Examples 2.2 and 2.3]{AUY}),
each of all examples of ZMC-surfaces with space-like points 
given in \cite{A,A3,CR,OK}
containing an entire null line $L$ 
has at least one cone-like singular point on $L$ (see Fig.~2).

For example, Fig.~2,~left 
is the hyperbolic catenoid (as a ZMC-map)
given in \cite{OK}. (This surface
is called the catenoid of 2nd kind 
in \cite{OK})
and satisfies 
\begin{equation}\label{eq:left}
\sin^2 x + y^2 = t^2,
\end{equation}
where $(x,y,t)$ is the canonical coordinate system of $\R^3_1$, and
$y=\pm t,\,\, x=k\pi$ ($k\in \Z$)
are entire null lines on the surface.

On the other hand,
the maximal ZMC-surface
given in \cite[Theorem 5.3 (1-i)]{A} 
satisfies
\begin{equation}\label{eq:center}
2(y-t)\sin t=(x^2+y^2-2yt+t^2)\cos t
\end{equation}
and $y=t,\,\,x=0$ gives the
entire null line on the surface
(see Fig. 2, right).
By \eqref{eq:left} and
\eqref{eq:center}, these two surfaces
have no self-intersections.

The purpose of this paper is to show the
following:

\medskip
\begin{theorem}\label{thm:main}
{\it There exist embedded maximal ZMC-surfaces 
 $($resp.~embedded ZMC-surfaces 
of mixed type$)$ each of which contains an entire null line.}
\end{theorem}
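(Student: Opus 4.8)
The plan is to look for the desired surfaces as graphs and to solve the characteristic Cauchy problem along the prospective null line. Writing the surface as a graph $t=h(x,y)$ over a domain $\Omega\subset\R^2$, one finds $B_f=1-h_x^2-h_y^2$, while the condition $A_f=0$ becomes the quasi-linear equation
\begin{equation}\label{eq:mse}
(1-h_y^2)h_{xx}+2h_xh_y\,h_{xy}+(1-h_x^2)h_{yy}=0,
\end{equation}
which is elliptic where $B_f>0$, hyperbolic where $B_f<0$, and degenerate where $B_f=0$. A light-like line in $\R^3_1$ projects to a straight line in the $xy$-plane along which $\nabla h$ must be a unit vector parallel to that line; after a Lorentzian motion we may take the line to be $\{(x,0,x)\mid x\in\R\}$, so that the requirement that it lie on the graph and consist of null points amounts to the \emph{characteristic} Cauchy data
\begin{equation}\label{eq:data}
h(x,0)=x,\qquad h_y(x,0)=0 .
\end{equation}
Since $1-h_x^2=0$ on $\{y=0\}$, equation \eqref{eq:mse} degenerates precisely on the initial curve.

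Next I would generate solutions by a power-series expansion transverse to the line. Writing $h=x+\sum_{n\ge 2}a_n(x)y^n$ and substituting into \eqref{eq:mse}, the coefficient of $y^2$ yields $a_2''+4a_2a_2'=0$, that is, the Riccati relation $a_2'+2a_2^2=c$ with $c$ constant, and each higher coefficient $a_n$ ($n\ge 3$) is then governed by a linear second-order ordinary differential equation with forcing from the lower terms. A direct computation gives
\begin{equation}\label{eq:Bexp}
B_f=-2\bigl(a_2'+2a_2^2\bigr)y^2-2\bigl(a_3'+6a_2a_3\bigr)y^3+O(y^4)=-2c\,y^2+O(y^3),
\end{equation}
so the transverse causal type of the surface is controlled by the single constant $c$. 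I would establish convergence either by a Cauchy--Kovalevskaya majorant adapted to the characteristic direction, or, equivalently, by realizing the graph through the Weierstrass representation for maxfaces from \cite{UY1}, taking holomorphic data whose hyperbolic Gauss map meets the ideal boundary exactly along the line and analytically continuing across the null locus.

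The mixed-type statement is the softer one and, I expect, can be settled globally. Taking $c=0$ with $a_2\equiv 0$, the equation for $a_3$ reduces to $a_3''=0$, so $a_3=\alpha x+\beta$ is defined for all $x\in\R$; by \eqref{eq:Bexp} one gets $B_f=-2\alpha y^3+O(y^4)$, whose sign changes as $y$ crosses $0$ when $\alpha\neq 0$. Thus the surface is space-like on one side of the line and time-like on the other, with the complete line $\{(x,0,x)\}$ as its null locus: an embedded (because it is a graph) mixed-type ZMC-surface carrying an entire null line, and without the cone-like singular points of the examples \eqref{eq:left} and \eqref{eq:center}. It remains only to check that the higher $a_n$ stay defined and the series converges on a suitable domain containing the whole line.

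The maximal-type case is where the main difficulty lies, precisely because of the rigidity recorded in Remark \ref{rmk:new}. To be genuinely space-like off the line one needs $c<0$ in \eqref{eq:Bexp}; but then the Riccati equation $a_2'=c-2a_2^2$ has no real equilibrium, so $a_2$ is strictly decreasing and blows up at a finite value of $x$ at each end. Hence a two-sided space-like neighborhood of the line can exist only over a bounded $x$-interval, and the null line cannot be entire while being enclosed two-sidedly by the space-like region---this is exactly the obstruction forbidding convex domains. The resolution, and the crux of the argument, is to choose the free data and a genuinely non-convex domain $\Omega$ so that the line lies in $\overline\Omega$ and remains complete, while $\Omega$ pinches towards the line (hence is not a strip) in such a way that the Riccati singularities of $a_2$ are expelled to $\partial\Omega$; one must then verify that $h$ is a smooth single-valued graph and that $B_f\ge 0$ holds throughout the interior of $\Omega$, not merely to leading order near the line. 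Fact \ref{fact:IL} guarantees that each degenerate null point so produced automatically carries a null line, which we arrange to coincide with the prescribed complete one.
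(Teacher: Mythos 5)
Your setup coincides with the paper's: write the surface as a graph, expand in powers of the variable transverse to the prospective null line, and observe that the second transverse Taylor coefficient satisfies a Riccati equation (the paper's \eqref{eq:c}) whose constant of integration controls the causal type to leading order. Your mixed-type discussion (second coefficient identically zero, third coefficient affine, $B_f$ cubic in the transverse variable and changing sign across the line) is essentially the paper's case (i). But the step you defer --- ``it remains only to check that the higher $a_n$ stay defined and the series converges on a suitable domain containing the whole line'' --- is the entire technical content of the paper's proof: the majorant estimates of Proposition \ref{prop:goal} yield a radius of convergence $C_\delta^{-1}$ in the transverse variable that shrinks as the parameter $\delta$ along the line grows, and the domain is the non-convex pinching union $U=\bigcup_{\delta\ge 1}V_\delta$. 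Nothing about this is routine, and it is not supplied.

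The maximal case contains a genuine structural error. You assert that space-likeness off the line requires a nonzero (negative) Riccati constant, and then propose to handle the resulting finite-time blow-up of $a_2$ by choosing a non-convex domain that ``expels the Riccati singularities to $\partial\Omega$.'' This cannot work: $a_2(x)=\tfrac12 h_{yy}(x,0)$ is a transverse derivative of $h$ evaluated \emph{on the line itself}, so if the graph is $C^2$ on any open set containing the entire line, then $a_2$ must be a global solution of the Riccati equation on all of $\R$; for negative constant there is none (the solution is a shifted $-\tan$, the paper's $\alpha^+$), and no choice of domain shape can rescue this --- the paper rules out $\alpha^+$ for exactly this reason. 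The correct move, which you miss, is to take the Riccati constant equal to zero with $a_2\equiv a_3\equiv 0$ and to produce space-likeness at \emph{quartic} transverse order: with the paper's data $\beta_4=4cy$ one gets $B_f=-2cx^4+\mathcal{O}(x^5)$ (in the paper's coordinates, where $x$ is transverse), which is positive off the line for $c<0$. The non-convexity of the domain then reflects the shrinking radius of convergence, not Riccati blow-up, consistently with Remark \ref{rmk:new}. As written, your argument gives the mixed-type half only modulo the convergence estimates and does not prove the maximal half.
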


\medskip
\begin{remark}\label{rmk:main}
This theorem gives the first example of 
maximal ZMC-immersion containing an entire null line.
On the other hand, examples of ZMC-maps 
of mixed type containing an entire null line
were given in \cite[Example 9.3 for $m=1$]{HK} 
using a different approach.  
\end{remark}

\medskip
\begin{corollary}\label{thm:cor}
There exists a family of embedded
maximal ZMC-hypersurfaces 
 $($resp.~embedded ZMC-hypersurfaces 
of mixed type$)$ 
in
Lorentz-Minkowski space $\R^{n+1}_1$
each of which contains an $(n-1)$-dimensional light-like plane.
\end{corollary}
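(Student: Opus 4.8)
The plan is to obtain the higher-dimensional examples as \emph{cylinders} over the surfaces produced by Theorem~\ref{thm:main}. Let $t=\phi(x_1,x_2)$ be the graph representation, over a domain $U\subset\R^2$, of one of the embedded ZMC-surfaces of Theorem~\ref{thm:main}: maximal type in the first case, of mixed type in the second. Writing the coordinates of $\R^{n+1}_1$ as $(x_1,\dots,x_n,t)$ with signature $(+\cdots+-)$, I would define
$$
F\colon U\times\R^{n-2}\to\R^{n+1}_1,\qquad
F(x_1,\dots,x_n):=(x_1,\dots,x_n,\phi(x_1,x_2)),
$$
the graph of the function $\Phi(x_1,\dots,x_n):=\phi(x_1,x_2)$, which is constant in the last $n-2$ variables. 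As the image of a graph, $F$ is automatically injective and an immersion, hence an embedding; this settles embeddedness once the ZMC condition and the causal structure are verified. Letting the base example range over the family given by Theorem~\ref{thm:main} and letting $n\ge 2$ vary produces the asserted family.

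The core of the argument is to check that $F$ is a ZMC-hypersurface and that its causal type is inherited. Set $v:=(\phi_1,\phi_2,0,\dots,0)\in\R^n$, where subscripts denote partials in $x_1,\dots,x_n$ (only the first two nonzero). Then $F_{x_i}\cdot F_{x_j}=\delta_{ij}-v_iv_j$, so the first fundamental matrix is $P=I_n-vv^{\mathsf T}$ and
$$
B_F=\det P=1-|v|^2=1-\phi_1^2-\phi_2^2,
$$
which coincides with the function $B_f$ of the base surface. Thus the space-like, null and time-like loci of $F$ are exactly the cylinders over those of the base, so $F$ is of maximal type (resp.\ of mixed type) precisely when the base is. For the mean-curvature function I would use the cofactor identity $\tilde P=(1-|v|^2)I_n+vv^{\mathsf T}$, a \emph{polynomial} identity valid at every point, including where $1-|v|^2=0$. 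Since $F_{x_ix_j}=\phi_{ij}\,e_{n+1}$ vanishes unless $i,j\in\{1,2\}$ and $e_{n+1}\cdot\tilde\nu$ is a nowhere-vanishing factor, $A_F=\op{trace}(\tilde P Q)$ reduces, up to that factor, to
$$
(1-\phi_1^2-\phi_2^2)(\phi_{11}+\phi_{22})+\phi_1^2\phi_{11}+2\phi_1\phi_2\phi_{12}+\phi_2^2\phi_{22},
$$
which is exactly the ZMC operator of the base graph $\phi(x_1,x_2)$. As the base is a ZMC-map this vanishes identically, whence $A_F\equiv0$ and $F$ is a ZMC-hypersurface.

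Finally I would promote the entire null line to a light-like $(n-1)$-plane. The entire null line of the base is the image of a curve $(x_1,x_2)=\gamma(s)$ along which $B_f=0$ and whose image is a complete light-like line with null direction $\ell$ lying in the $(x_1,x_2,t)$-subspace. Under $F$ this sweeps out the affine set
$$
\Pi:=\{F(\gamma(s),x_3,\dots,x_n): s,x_3,\dots,x_n\in\R\},
$$
whose tangent space is spanned by $\ell$ together with the space-like directions $e_3,\dots,e_n$. Since $\ell$ is null and orthogonal to each $e_j$, while the $e_j$ span a positive-definite subspace, the induced metric on $\Pi$ is degenerate with one-dimensional radical; hence $\Pi$ is a light-like $(n-1)$-plane, and it is entire because the parameters range over all of $\R$. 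Every point of $\Pi$ is a null point of $F$ since $B_F$ vanishes there. The only step requiring genuine care is showing $A_F\equiv0$ \emph{through} the null and time-like regions: one must invoke the global cofactor identity above rather than the divergence-form maximal surface equation, which degenerates where $1-|v|^2=0$. Everything else is inherited formally from Theorem~\ref{thm:main}.
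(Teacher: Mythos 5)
Your proposal is correct and follows essentially the same route as the paper, which proves the corollary by referring to \cite[Corollary 1.2]{OJM}: there the higher-dimensional examples are exactly the cylinders $t=\phi(x_1,x_2)$ over the surface examples, with the same verification that $B_F$ and $A_F$ reduce to the two-variable quantities. Your explicit check via the polynomial cofactor identity $\tilde P=(1-|v|^2)I_n+vv^{\mathsf T}$ is a clean way to justify the computation across the null set, and matches the intended argument.
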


The proof is completely same as in the proof
of \cite[Corollary 1.2]{OJM}.
The strategy to prove 
Theorem \ref{thm:main}
is as follows:
In \cite{OJM}, local existence of
a 1-parameter family $\{f_c\}$
of ZMC-surfaces 
of mixed type such that  each $f_c$ 
contains a null line segment was shown.
By improving the argument there, we
will show in Section 3
that each $f_c$ can be analytically extended
so that it contains an entire null line.
Also, by modifying the estimates in
\cite{OJM}, we will also show the
existence of ZMC-surfaces 
of maximal type each of which contains 
an entire null line.
Unfortunately, our construction is local, and so
the resulting surfaces are not proper. 
So the following question still remains
(this is essentially the same question 
as in \cite[Question 1]{AUY}):

\medskip
{\bf Question.}
{\it Are there properly embedded maximal surfaces
in $\R^3_1$, 
which contain at least one entire null line?}

\medskip
As a partial answer of this question, a Bernstein type theorem
for entire ZMC-graphs
consisting only of space-like or null points
was given in the authors' previous work \cite{AUY}.
As we have mentioned in Remark \ref{rmk:main},
Hashimoto and Kato \cite{HK} recently
gave a new method for constructing  ZMC-maps 
containing null lines, using bi-complex
extensions. The authors expect that this could
be developed to apply to the above question.

\bigskip
\section{ZMC-surfaces with null lines}

We consider a ZMC-immersion $f$ containing the entire
null line  in $\R^3_1$.
By a suitable Lorentzian transformation of the ambient space,
we may assume that this entire null line is given by
$$
L:=\{(0,y,y)\in \R^3_1\,;\, y\in \R\}.
$$
On a neighborhood of a null point, 
there exist a domain $U$ in the $xy$-plane
containing the $y$-axis
and a real analytic function $\psi:U\to \R$
such that $F$ can be expressed as the graph of a function
of the form 
\begin{equation}\label{zm22b}
 \psi(x,y):=y+\frac{\alpha(y)}{2}x^2+\sum_{k=3}^\infty \frac{\beta_k(y)}k x^k,
\end{equation}
where $\alpha$ and $\beta_k$ ($k=3,4,5,\dots$)
are certain real analytic functions on $\R$.
Since 
$$
f(x,y)=(x,y,\psi(x,y)),
$$
we have
\begin{align*}
A_f&=(1-\psi_y^2)\psi_{xx}+2\psi_x\psi_y 
\psi_{xy}+(1-\psi_x^2)\psi_{yy}, \\
B_f&=1-\psi_x^2-\psi_y^2.
\end{align*}
Since $f$  is a ZMC-graph, the function $A:=A_f$ vanishes identically. 
So we have
 \begin{equation}\label{eq:a2}
    0=A_{xx}|_{x=0}=2\alpha\alpha'+\alpha'',
 \end{equation}
 where the prime means the derivative with respect to $y$.
 Hence there exists a constant $\mu$ such that
\begin{equation}\label{eq:c}
\alpha'+\alpha^2+\mu=0.
\end{equation}
When $\mu>0$, then $f$ is space-like,
and when $\mu<0$, then $f$ is time-like.
If $\mu=0$,  the causal type of $f$ cannot be
specified, and the
following four possibilities arise:
\begin{enumerate}
\item[(i)] the graph of $\psi$ is of mixed type, 
\item[(ii)] the graph of $\psi$ consists of space-like points, 
except for the $y$-axis,
\item[(iii)] the graph of $\psi$ consists of time-like points except for the $y$-axis, and
\item[(iv)] the graph of $\psi$ consists only of null points.
\end{enumerate}
The final case (iv) occurs only when
the graph of $\psi$ lies on a light-like 
plane in $\R^3_1$ (cf. \cite{AUY}).

\medskip
\begin{remark}
If we weaken the condition that the image of $f$ contains a
null line segment, an example for the case (i)
was shown in \cite{OJM}, and a general local existence theorem for
such $\psi$ satisfying (i), (ii) and (iii) was shown
in \cite[Prop. 6.7]{UY2}. 
\end{remark}

\medskip
As shown in \cite{CR} and \cite{UY2},
by a homothetic change
\begin{align*}
   &\widetilde{f}(x,y):=(x,y, \tilde{\psi}(x,y))\\
  &\left(\tilde \psi(x,y):= \frac{1}{m}\psi(mx,my),~m>0\right),
\end{align*}
one can normalize  $\mu$ to be $-1$, $0$ or $1$.
In fact, as shown in \cite{CR},
\[
 \alpha^+:=-\tan(y+c)
 \qquad (|c|<\frac{\pi}2)
\]
is a general solution of $\alpha'+\alpha^2+\mu=0$ for $\mu=1$.
If $\mu=0$, then
\[
  \alpha^0_{I}:=0 \quad
  \mbox{and}
  \quad \alpha^0_{I\!I}:= \frac{1}{y+c} 
  \qquad (c\in \R\setminus \{0\})
\]
are the solutions, and
\begin{alignat*}{2}
 \alpha^-_{I}&:=\tanh(y+c)
 \qquad &(c&\in\R),
 \\
 \alpha^-_{I\!I}&:=\coth(y+c)
 \qquad &(c&\in \R\setminus \{0\}),\\
 \alpha^-_{I\!I\!I}&:=\pm 1 
\end{alignat*}
are the solutions for $\mu=-1$.
If 
$$
\alpha=\alpha^+,\,\, \alpha^0_{I\!I},\,\,
\alpha^-_{I\!I}, 
$$
then they are not defined on $\R$,
so these three cases cannot produce
any embedded ZMC-surfaces with entire null lines.
In fact, the maximal surface given
in \eqref{eq:center} is of type $\alpha^+$. 
On the other hand,
the light cone (Fig.~1, right) and
the maximal surface given
in \eqref{eq:left} is of type $\alpha^0_{I\!I}$ on each null line. 
Moreover, the time-like surface
given by the implicit function
\begin{equation}
(y - t + \tanh t)^2 + x^2 = \tanh^2t
\end{equation}
has a null line $y=t,\,\,x=0$ and 
is of type $\alpha^-_{I\!I}$ (see Fig.~3).
So if we seek candidates of
ZMC-immersions containing
entire null lines, then only the 
possibilities are
$$
\alpha=\alpha^0_{I},\,\,
\alpha^-_{I}, \,\, \alpha^-_{I\!I\!I}.
$$
The cases $\alpha=\alpha^-_{I}, \,\, \alpha^-_{I\!I\!I}$ 
give only time-like ZMC-surfaces,
and properly embedded examples
were known (cf. \cite[Examples 2.2 and 2.3]{AUY}).
So only the case $\alpha:=\alpha^0_{I}(\equiv 0)$ 
is remaining.

\begin{figure}[htb]
 \begin{center}
   \begin{tabular}{c@{\hspace{0.3cm}}c@{\hspace{-0.4cm}}c}
        \includegraphics[height=3.5cm]{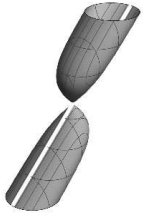} 
    \end{tabular}
\caption{A time-like ZMC-surface without self-intersection
which has a cone-like singular point
lying on a null line.}
\label{fig:3}
 \end{center}
\end{figure}

This special case was discussed in \cite{OJM},
and here we would like to point out  
the method used in \cite{OJM} is sufficient
to show the following assertion:

\medskip
\begin{theorem}\label{MainThm}
There exist real analytic functions $\psi_i:U\to \R$
$(i=1,2,3)$
defined on a domain $U$ in $\R^2$, each of whose
graphs gives a ZMC-embedding containing 
an entire null line of type $\alpha^0_I$,
so that $\psi_1$, $\psi_2$ and $\psi_3$
satisfy the conditions $(i)$, $(ii)$ and $(iii)$,
respectively.
\end{theorem}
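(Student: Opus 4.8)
The plan is to realize each surface as a graph $f_i(x,y)=(x,y,\psi_i(x,y))$ over a domain $U\subset\R^2$ that contains the whole $y$-axis, so that the null line is $L=\{(0,y,y)\}$. Because a graph is injective and $df_i$ has rank two everywhere, each $f_i$ is automatically an embedding; thus the substantive requirements are only that $A_{f_i}\equiv0$, that $U$ contains the entire $y$-axis (making $L$ \emph{entire}), and that the sign of $B_{f_i}$ off the $y$-axis matches (i), (ii), or (iii). Following the reduction above I take $\alpha\equiv0$ (type $\alpha^0_I$, i.e.\ $\mu=0$ in \eqref{eq:c}) and seek $\psi$ as in \eqref{zm22b} with $\beta_2=\alpha=0$, i.e.\ $\psi=y+\sum_{k\ge3}\tfrac{\beta_k(y)}{k}x^k$.

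First I would substitute this ansatz into $A_f=0$ and collect powers of $x$. The only term carrying a second $y$-derivative, $(1-\psi_x^2)\psi_{yy}$, contributes $\beta_k''/k$ at order $x^k$, whereas every remaining term is a product that at order $x^k$ involves only $\beta_3,\dots,\beta_{k-1}$ and their first $y$-derivatives. Hence $A_f=0$ is equivalent to a triangular system
\begin{equation*}
\beta_k''=R_k\bigl(y;\beta_3,\dots,\beta_{k-1},\beta_3',\dots,\beta_{k-1}'\bigr)\qquad(k\ge3),
\end{equation*}
with each $R_k$ a universal polynomial; in particular the $x^3$-term forces $\beta_3''=0$. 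Integrating twice over all of $\R$, with free constants $\beta_k(0),\beta_k'(0)$, produces real-analytic $\beta_k$ defined on the whole line (the right-hand side being polynomial in the already-constructed $\beta_3,\dots,\beta_{k-1}$); so the coefficient functions present no obstruction in the $y$-direction.

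The causal type is then fixed through the leading behaviour of $B_f=1-\psi_x^2-\psi_y^2$. A short expansion gives $B_f=-\tfrac{2}{3}\beta_3'(y)\,x^3+O(x^4)$, so taking $\beta_3$ non-constant (say $\beta_3=y$) makes $B_f$ change sign across $x=0$ and gives mixed type (i). If instead $\beta_3$ is constant, then $B_f=-\bigl(\beta_3^2+\tfrac12\beta_4'(y)\bigr)x^4+O(x^5)$, and choosing $\beta_4'$ of suitable constant sign yields a space-like graph (ii) (positive leading coefficient) or a time-like one (iii). The time-like case in fact admits the explicit solution $\psi_3=y+x^3/3$, for which $A_f\equiv0$ and $B_f=-x^4$, so $x=0$ is the only null set and $L$ is entire; hence (iii) is done outright.

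The remaining and main difficulty is to make the formal solutions for (i) and (ii) genuinely convergent on a domain containing the \emph{entire} $y$-axis, thereby upgrading the null-line \emph{segment} of \cite{OJM} to an entire null line. Since the $\beta_k$ are already globally defined in $y$, this is purely a matter of the $x$-radius of convergence: I would prove bounds $|\beta_k(y)|\le C\,M^k$ that are uniform in $y$ on each compact interval, so that $\sum_k\tfrac{\beta_k(y)}{k}x^k$ converges for $|x|<1/M$ there and the local pieces glue to an open neighborhood $U$ of the whole $y$-axis. This I would do by a majorant (Cauchy--Kovalevskaya type) estimate applied to the recursion, tracking how the majorant constants depend on the $y$-interval and checking they remain finite on compacta --- precisely the point at which the local estimates of \cite{OJM} must be sharpened. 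Taking $U$ thin enough in $x$ then guarantees that the prescribed leading term of $B_f$ dominates, so the causal type holds off the $y$-axis all along $L$. I expect this uniform-in-$y$ convergence to be the crux, because $x=0$ is a characteristic line for the equation and the radius furnished by the purely local argument tends to degenerate as $|y|\to\infty$.
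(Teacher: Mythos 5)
Your overall strategy coincides with the paper's: the same ansatz \eqref{zm22b} with $\alpha\equiv 0$, the same triangular hierarchy of ODEs for the $\beta_k$, and the same leading-order computation of $B_f$ to select the causal type (your $B_f=-\tfrac23\beta_3'x^3+O(x^4)$ and $B_f=-(\beta_3^2+\tfrac12\beta_4')x^4+O(x^5)$ match the paper's choices $\beta_3=3cy$ in \eqref{eq:b3} for case (i) and $\beta_3=0$, $\beta_4=4cy$ in \eqref{eq:b4} for cases (ii)--(iii), where $B_f(x,0)=-2cx^4+O(x^5)$). Two remarks on details: your explicit $\psi_3=y+x^3/3$ for case (iii) is a genuine shortcut the paper does not use --- it satisfies $A_f\equiv 0$ exactly with $B_f=-x^4$ on all of $\R^2$, so it settles (iii) without any series argument; and your description of the recursion is slightly imprecise, since the terms $R_k$ contain second derivatives $\beta''_{k-m-n+2}$ of lower-index coefficients, not only first derivatives (this does not affect the triangular structure).

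The genuine gap is that for cases (i) and (ii) you correctly isolate the crux --- bounds on $|\beta_k(y)|$ that are uniform on compact $y$-intervals and geometric in $k$ --- but you do not prove them; you only announce that a ``majorant (Cauchy--Kovalevskaya type) estimate'' would work. This is exactly where the paper does its real work: Proposition \ref{prop:goal} establishes, by induction on $l$ using the specific structure of $P_k$, $Q_k$, $R_k$ and double integration of \eqref{eq:diffeq} from $y=0$, the bounds \eqref{eq:b-est-1}--\eqref{eq:b-est-3} with the explicit constant $M_\delta$ of \eqref{eq:M}, yielding convergence on $V_\delta=(-C_\delta^{-1},C_\delta^{-1})\times(-\delta,\delta)$ with $C_\delta=\sqrt{\delta}M$ as in \eqref{eq:Ct}, and then glues via uniqueness to the non-convex domain $U$ of \eqref{eq:U}. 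A generic Cauchy--Kovalevskaya majorant does not apply off the shelf here, precisely because (as you note) $x=0$ is characteristic and each step of the recursion is a second-order ODE in $y$ rather than an algebraic solve; the gain of the factor $|y|^{l^*+2}/(l^*+2)^2$ from the double integration is what makes the induction close and what produces the $\sqrt{\delta}$-dependence of the shrinking $x$-radius. Without carrying out some version of these estimates, your argument establishes only the formal solution for (i) and (ii), not the existence of the functions $\psi_1$, $\psi_2$.
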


\medskip
The existence of $\psi_1$ and $\psi_2$  proves Theorem~\ref{thm:main}
in the introduction.

\medskip
\section{Proof of Theorem \ref{MainThm}}

We first consider the case (i).
For each $c>0$,
a function $\psi$ of a ZMC surface of 
mixed type satisfying $\alpha=0$ was constructed
in \cite{OJM}.
Such a surface can be characterized by the condition
\begin{equation}\label{eq:b3}
\beta_3(y)=3cy,\,\,\, 
\beta_k(0)=\beta'_k(0)=0 \qquad (k\ge 4).
\end{equation}
For an arbitrary $\delta>0$,
there exists a positive constant $C_\delta$
such that $\psi$ is defined on
$$
V_\delta:=(-C_\delta^{-1},C_\delta^{-1})\times (-\delta,\delta).
$$

\begin{remark}\label{rem:ojm}
In \cite[Page 290]{OJM}, 
the inequality
\begin{equation}\label{eq:ojm2}
|\beta_l|\le \frac{3c |y|^{l^*+2}}{(l^*+2)^2}M^{l-3}\le \theta_0 c^l
\end{equation}
was shown, where
$C_\delta:=\delta M$ and $\theta_0:=3(\delta M)^3/c$.
However, there was a typographical error, and
we should correct
\begin{equation}\label{eq:Ct}
C_\delta:=\sqrt{\delta} M, \qquad \theta_0:=\frac{3c}{\sqrt{\delta}M^3}.
\end{equation}
Then \eqref{eq:ojm2} holds, correctly.
\end{remark}

The uniqueness of the solution $\psi$
implies that $\psi$ can be extended on the domain
$
U:=\bigcup_{\delta \geq1} V_\delta.
$ 
In particular, $U$ contains the entirety of the $y$-axis, 
and so $\psi$ gives an example of type (i)
containing the entire null line $L$.

We show the existence of surfaces satisfying (ii)
or (iii) as a modification of the proof of \cite{OJM}.
We will change the initial condition 
and modify the estimates in \cite{OJM} as follows: 
We set
\begin{align}\label{eq:b4}
&\beta_3(y)=0,\quad \beta_4(y)=4cy\qquad (c\ne 0),\\ 
&\beta_k(0)=\beta'_k(0)=0 \qquad (k\ge 5),
\nonumber
\end{align}
as the initial condition.
Then, by \cite[(6.5)]{UY2},  
\begin{align*}
B_f(x,0)&=-2cx^4+\mathcal{O}(x^5)
\end{align*}
holds. 
Hence, if the power series in 
\eqref{zm22b} with the condition \eqref{eq:b4} converges, 
the graph of $\psi$ 
satisfies (ii) (resp. (iii))
if $c<0$ (resp. $c>0$).
The convergence of \eqref{zm22b} can be proved by 
 imitating the argument in \cite{OJM}.
In \cite{OJM}, functions $b_i$ ($i\ge 0$) were used, where 
$$
b_0:=y\quad b_1:=0,\quad
b_2:=\alpha(=0)
$$
and
$
b_k=\beta_k
$
holds for $k\ge 3$.
Since $b_2=b_3=0$ in \cite{OJM}, 
the series  $\{P_k,Q_k,R_k\}$
were produced by the following recursive formula:
We set
$P_j=Q_k=R_l=0$ for $3\le j\le 5$,
$ 3 \le k\le 9$, $3 \le l \le 10$, 
and
\begin{align*}
  P_k & := \sum_{m=4}^{k-2} \frac{2(k-2m+3)}{k-m+2}
\beta_m\beta'_{k-m+2},\,\,\,
k \ge 6, \\
  Q_k & := 
     \sum_{m=4}^{k-6}\sum_{n=4}^{k-m-2}
         \frac{3n-k+m-1}{mn}\beta'_{m}\beta'_{n}\beta_{k-m-n+2}, \\
&\phantom{aaaaaaaaaaaaaaaaaaaaaaaaaaaaaa}k \ge 10, \\
  R_k & :=
     \sum_{m=4}^{k-7}\sum_{n=4}^{k-m-3}
         \frac{\beta_m\beta_n\beta''_{k-m-n+2}}{k-m-n+2},\,\,\,
k \ge 11. 
\end{align*}
Here $P_k,\,Q_k$  and $R_k$ are determined by $\beta_{j}$
($4\le j\le k-2$). 
Thus, the functions $\beta_k$ ($k\ge 5$)
are inductively determined by the 
ordinary differential equation
\begin{equation}
\label{eq:diffeq}
\beta''_k
   = -k (P_k+Q_k-R_k),\,\, \beta_k(0)=\beta'_k(0)=0, 
\end{equation}
where $k \ge 5$.  
The following proposition holds, which 
corresponds to \cite[Proposition 1.3]{OJM}: 

\medskip
\begin{proposition}
\label{prop:goal}
 For each $c\ne 0$ and $\delta>0$, we set
 \begin{equation}\label{eq:M}
   M_\delta:= 3\max\left\{ 144\tau\,|c|\, |\delta|^{3/2},\,
              \sqrt[4]{192 c^2 \tau}
       \right\},
 \end{equation}
 where $\tau$ is a positive constant such that
 \begin{equation*}\label{eq:tau}
     t \int_{t}^{1-t}\frac{du}{u^2(1-u)^2}
 \leq \tau\qquad \left(0<t<\frac{1}{2}\right).
 \end{equation*}
 Then the functions $\beta_l(y)$ $(l\ge 5)$ 
 satisfy the inequalities
 \begin{align}
   |\beta''_l(y)|&\leq |c|\,|y|^{l^*}M^{l-3},\label{eq:b-est-1}\\
   |\beta'_l(y)|&\leq \frac{3|c|\,|y|^{l^*+1}}{l^*+2}M^{l-3},\label{eq:b-est-2}\\
   |\beta_l(y)|&\leq \frac{3|c|\,|y|^{l^*+2}}{(l^*+2)^2}M^{l-3} \label{eq:b-est-3}
 \end{align}
 for any 
$y\in [-\delta,\delta]$,
 where
 \begin{equation*}\label{eq:l-star}
    l^*:=\frac{1}{2}(l-1)-2\qquad (l=5,6,\dots).
 \end{equation*}
\end{proposition}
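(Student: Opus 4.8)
The plan is to establish \eqref{eq:b-est-1}, \eqref{eq:b-est-2} and \eqref{eq:b-est-3} simultaneously by induction on $l\ge 5$, using the majorant method of \cite{OJM}. Throughout I abbreviate $M:=M_\delta$ and keep $y\in[-\delta,\delta]$. The three estimates are logically nested: once the second-derivative bound \eqref{eq:b-est-1} is known for a given $l$, the bounds \eqref{eq:b-est-2} and \eqref{eq:b-est-3} follow by integrating twice and using the initial conditions $\beta_l(0)=\beta_l'(0)=0$ from \eqref{eq:diffeq}. Indeed, since $\beta_l'(0)=0$ one has $|\beta_l'(y)|\le\int_0^{|y|}|\beta_l''(s)|\,ds\le |c|M^{l-3}|y|^{l^*+1}/(l^*+1)$, and because $l^*\ge 0$ for $l\ge 5$ the elementary inequality $1/(l^*+1)\le 3/(l^*+2)$ holds, giving \eqref{eq:b-est-2}; integrating once more produces the factor $1/(l^*+2)^2$ of \eqref{eq:b-est-3}. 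Thus the entire difficulty is concentrated in the bound \eqref{eq:b-est-1} for $\beta_l''=-l(P_l+Q_l-R_l)$.

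For the base of the induction, note that $P_l=Q_l=R_l=0$ for the smallest indices, so that $\beta_5\equiv 0$ and the first nonvanishing contributions enter through $P_l$ at $l=6$, through $Q_l$ at $l=10$, and through $R_l$ at $l=11$; the estimates hold trivially, or by direct computation, in this initial range. For the inductive step I would assume \eqref{eq:b-est-1}--\eqref{eq:b-est-3} for all $j$ with $5\le j<l$ and insert these bounds---together with the explicit initial datum $\beta_4(y)=4cy$ of \eqref{eq:b4}---into $P_l$, $Q_l$, $R_l$, each of which by construction involves only $\beta_j$ with $4\le j\le l-2$. The heart of the matter is a bookkeeping of powers of $|y|$: writing $l^*+2=\tfrac12(l-1)$, a short computation shows that the generic two-fold products in $P_l$ carry the factor $|y|^{l^*+3/2}$, whereas every three-fold product in $Q_l$ and $R_l$ carries exactly $|y|^{l^*}$. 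Hence, on $[-\delta,\delta]$, the two-fold sum contributes an extra $|y|^{3/2}\le\delta^{3/2}$ while the three-fold sums contribute no excess power. This dichotomy is precisely what forces the two entries of the maximum in \eqref{eq:M}: the $144\tau|c|\,|\delta|^{3/2}$ branch is tuned to absorb $P_l$, and the $\delta$-independent branch $\sqrt[4]{192c^2\tau}$ is tuned to absorb $Q_l$ and $R_l$.

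The remaining task in the step is to control the combinatorial size of the convolutions uniformly in $l$. Here I would exploit the coefficients $1/(l^*+2)$ and $1/(l^*+2)^2$ furnished by the inductive bounds \eqref{eq:b-est-2}--\eqref{eq:b-est-3}: after the substitution $u\approx(m-1)/(l-1)$, the sum defining $P_l$ becomes a Riemann sum for $\int\frac{du}{u^2(1-u)^2}$, whose truncated value is exactly what the hypothesis $t\int_t^{1-t}\frac{du}{u^2(1-u)^2}\le\tau$ bounds; crucially, the spacing $1/(l-1)$ of the Riemann sum cancels the prefactor $l$ coming from $\beta_l''=-l(P_l+Q_l-R_l)$, so that no $l$-dependence survives. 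An analogous iterated estimate handles the double sums $Q_l$ and $R_l$. Collecting the numerical constants then yields $|\beta_l''(y)|\le|c|\,|y|^{l^*}M^{l-3}$ on $[-\delta,\delta]$, which closes the induction.

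I expect the main obstacle to be precisely this last collection step: keeping the numerical constants under control while simultaneously (a)~handling the irregular terms in which the index $4$ occurs, since the datum $\beta_4=4cy$ breaks the clean pattern $|y|^{\,j^*+2}$ and must be estimated separately (its excess power is $|y|$ rather than $|y|^{3/2}$), and (b)~verifying that the single constant $M_\delta$ of \eqref{eq:M} dominates all three families of terms at once, across both small and large $\delta$, so that the worst case of each family is covered by one branch of the maximum. Matching the specific constants $144$ and $192$ to these worst-case terms is the delicate and essentially computational part; the power counting in $|y|$ and the $\tau$-integral are the structural ingredients that make the induction close.
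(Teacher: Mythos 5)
Your outline is correct and follows essentially the same route as the paper: the paper's own proof checks the small indices explicitly ($\beta_5=0$, $\beta_6=-8c^2y^3$, etc.) and then invokes verbatim the induction of Proposition~1.9 of \cite{OJM}, observing that with $b_3=0$ the sums $P_k,Q_k,R_k$ contain fewer terms. Your added structural detail --- deriving \eqref{eq:b-est-2}--\eqref{eq:b-est-3} from \eqref{eq:b-est-1} by integration, the power count $|y|^{l^*+3/2}$ for the twofold sum $P_l$ versus $|y|^{l^*}$ for $Q_l,R_l$ matching the two branches of $M_\delta$, the role of $\tau$ as a bound on the Riemann-sum convolutions, and the irregular $m=4$ terms --- is precisely the mechanism of that cited induction, so there is no discrepancy to report.
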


\begin{proof}
We can prove this by induction on $l\geq 5$.
The functions $\beta_k$ of small indices are determined by the 
recursive formula \eqref{eq:diffeq} as follows:
\begin{equation}\label{eq:list_beta_l}
\begin{aligned}
&\beta_4=4cy,\quad \beta_5=0,\quad
\beta_6=-8 c^2 y^3,\\
&\beta_7=0,\quad
\beta_8=-32c^3y^5.
\end{aligned}
\end{equation}
The estimates \eqref{eq:b-est-1}--\eqref{eq:b-est-3} 
for $l=5$ follow from \eqref{eq:list_beta_l}.
In the case of \cite{OJM},
we set $b_3(=\beta_3)\ne 0$, but
in our present case $b_3=0$ and the first non-trivial
term begins from $b_4(=\beta_4)$.
In particular, sub-terms appearing in $P_k,Q_k,R_k$ 
are fewer than those in \cite{OJM}.
So, \eqref{eq:b-est-1}--\eqref{eq:b-est-3} for $l\geq 6$
follow using the same induction argument as in \cite{OJM}.
Therefore, the 
same estimates as in the proof of 
Proposition 1.9 in \cite{OJM} are entirely valid also in this case.
\end{proof}

By Proposition \ref{prop:goal},
for an arbitrary $\delta \geq 1$,
$\psi$ is well-defined on
$$
V_\delta:=(-C_\delta^{-1},C_\delta^{-1})\times (-\delta,\delta),
$$
where $C_\delta:=\sqrt{\delta} M$ (cf. \eqref{eq:Ct}).
In fact, like as in the case of \cite{OJM},
the inequality
\begin{equation}\label{eq:ojm}
|\beta_l|\le \frac{3|c|\,|y|^{l^*+2}}{(l^*+2)^2}M^{l-3}
\le \theta_0 C_\delta^l
\qquad (l\ge 5)
\end{equation}
can be shown, where
$\theta_0:=3|c|/(\sqrt{\delta} M^3)$.
The uniqueness of the solution $\psi$
implies that $\psi$ can be extended on the domain
\begin{equation}\label{eq:U}
U:=\bigcup_{\delta \geq1} V_\delta.
\end{equation}
In particular, $U$ contains the entirety of the $y$-axis, 
and so $\psi$ gives an example of type (ii)
(resp. (iii)) if $c<0$ (resp. $c>0$)
containing the entire null line $L$.

\begin{remark}\label{rmk:new}
The domain $U$
defined in \eqref{eq:U} is not convex in $\R^2$.
However, this fact is crucial to
construct a maximal graph  containing an entire line.
In fact, as pointed out in \cite[Lemma 1]{AHUY}, 
if there exists a maximal graph defined on
a closed convex domain in $\R^2$ whose image contains
an entire null line, then it must be a light-like plane.
So if there exists an entire graph of mixed type
containing an entire null line $L$, then either
\begin{itemize}
\item the both side of the entire null line are time-like, or
\item 
there exists a sequence of time-like points 
(on the side containing space-like points) 
which is asymptotic to
the line $L$,  
\end{itemize}
as a consequence.
\end{remark}

\begin{acknowledgement}
The authors would like to express their gratitude to
Atsufumi Honda and Wayne Rossman for helpful comments.
\end{acknowledgement}

\bigskip

\end{document}